\numberwithin{equation}{section}
\title{A BBP-type formula for the remainder of the  Madhava-Gregory-Leibniz series}
\author{Benoit Cloitre}
\date{}
\theoremstyle{plain}
\newtheorem{theorem}{Theorem}[section]
\newtheorem{lemma}{Lemma}[section]
\newcommand{\poch}[2]{(#1)_{#2}}      
\begin{document}
\maketitle
\begin{abstract}
We derive a BBP-type formula for the remainder of the Madhava-Gregory-Leibniz
series for $\pi$. The result is a closed form in base-$16$ with
Pochhammer denominators. The analogous formula for the alternating series for $\log 2$ is also presented.
\end{abstract}

\paragraph{MSC (2020).} 11Y60; 11M06; 33B15. \quad
\textbf{Keywords.} BBP-type formula; Madhava-Gregory-Leibniz series; digamma function; Gauss's multiplication formula; Pochhammer symbol; beta function.

\section{Introduction and statement}

The discovery of the Bailey–Borwein–Plouffe (BBP) formula showed
that hexadecimal digits of $\pi$ can be extracted without computing
the preceding ones; see \cite{BBP1997} and expositions/compendia
such as \cite{BaileyCompendium,MathWorldBBP,MathWorldPiFormulas}.
Explicitly, 
\[
\pi\;=\;\sum_{k=0}^{\infty}\frac{1}{16^{k}}\!\left(\frac{4}{8k+1}-\frac{2}{8k+4}-\frac{1}{8k+5}-\frac{1}{8k+6}\right).
\]
Since then, many BBP-type identities have been found for various constants
and bases (binary, ternary, and non-integer golden-ratio bases); see
for instance \cite{Chan2009,AdegokeLafont,KristensenMathiasen}. We investigate the remainder of the classical Madhava-Gregory-Leibniz series: 
\[
\frac{\pi}{4}=\sum_{m\ge0}\frac{(-1)^{m}}{2m+1}.
\]
The study of series remainders is a central topic connecting numerical analysis and symbolic computation. While classical methods like the Euler-Maclaurin formula provide highly efficient asymptotic expansions for the \emph{numerical approximation} of a remainder, the interest of a BBP-type representation is of a different, more \emph{algebraic} nature. The primary advantage of the formula presented here is not its convergence speed for approximation purposes—which is linear—but rather its unique structure. This structure is what makes digit-extraction algorithms applicable, in principle, to the remainder $R_n$ in base-16 \cite{BBP1997}, a property not shared by methods like Euler-Maclaurin. The contribution of this work is to provide an exact, closed-form identity of BBP-type for the remainder of the Madhava-Gregory-Leibniz series.

To fix notation unambiguously, we define the $n$-term partial sum
and the remainder by 
\[
S_{n}:=\sum_{m=0}^{n-1}\frac{(-1)^{m}}{2m+1}=\sum_{j=1}^{n}\frac{(-1)^{j+1}}{2j-1},\qquad R_{n}:=\frac{\pi}{4}-S_{n}=\sum_{m=n}^{\infty}\frac{(-1)^{m}}{2m+1}.
\]
Our goal is an exact BBP-type expression for $R_{n}$ in base-$16$. For completeness, we also establish
in Section~\ref{sec:log2} an analogous formula for the tail
of the alternating harmonic series for $\log 2$, proved by the same
method.

\medskip

\paragraph{Notation.}
The rising factorial (Pochhammer symbol) is defined, for $m\in\mathbb{N}$,
by 
\[
(a)_{0}:=1,\qquad(a)_{m}:=a(a+1)\cdots(a+m-1)=\frac{\Gamma(a+m)}{\Gamma(a)}.
\]
Thus $(8k+q)_{2n}$ denotes a product of length $2n$. We also write
$\psi=\Gamma'/\Gamma$ for the digamma function.

\begin{theorem}\label{thm:main} For every integer $n\ge1$, 
\begin{align*}
R_{n}=\frac{(-1)^{n}(2n-1)!}{16}\sum_{k=0}^{\infty}\frac{1}{16^{k}}\Bigg[&\frac{8}{\poch{8k+1}{2n}}-\frac{4}{\poch{8k+3}{2n}}-\frac{4}{\poch{8k+4}{2n}} \\
&-\frac{2}{\poch{8k+5}{2n}}+\frac{1}{\poch{8k+7}{2n}}+\frac{1}{\poch{8k+8}{2n}}\Bigg].
\end{align*}
\end{theorem}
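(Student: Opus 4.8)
The plan is to represent both sides as integrals over $[0,1]$ and then reduce the claimed identity to a single polynomial identity followed by a linear substitution. First I would record the integral form of the remainder. Summing the geometric tail $\sum_{m\ge n}(-1)^m x^{2m}=(-1)^n x^{2n}/(1+x^2)$ and integrating over $[0,1]$ gives
\[
R_n=(-1)^n\int_0^1\frac{x^{2n}}{1+x^2}\,\dd x,
\]
where the behaviour at $x=1$ is handled rigorously by carrying the explicit partial-sum remainder (the tail $\int_0^1 x^{2N+2}/(1+x^2)\,\dd x\to0$), or by Abel's theorem. This is the target to match.

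For the right-hand side I would convert each Pochhammer reciprocal into a beta integral. Since $\dfrac{(2n-1)!}{\poch{a}{2n}}=B(a,2n)=\int_0^1 t^{a-1}(1-t)^{2n-1}\,\dd t$, setting $a=8k+q$ and inserting this into the theorem's sum gives, after interchanging summation and integration (legitimate because $\sum_k (t^8/16)^k$ converges uniformly on $[0,1]$ by the $M$-test, its ratio being $\le 1/16$, and the integrand is continuous there since $16-t^8\ge15$) and summing the geometric series $\sum_{k\ge0}(t^8/16)^k=16/(16-t^8)$,
\[
\text{RHS}=(-1)^n\int_0^1\frac{(1-t)^{2n-1}P(t)}{16-t^8}\,\dd t,\qquad P(t)=8-4t^2-4t^3-2t^4+t^6+t^7,
\]
where $P(t)=\sum_q c_q t^{q-1}$ collects the six numerator constants against the offsets $q\in\{1,3,4,5,7,8\}$ (the absent $q=2,6$ contribute $0$), and the prefactor $1/16$ cancels the $16$ from the geometric sum.

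The crux, and the one genuinely non-routine step, is the rational-function simplification
\[
\frac{P(t)}{16-t^8}=\frac{1-t}{(1-t)^2+1}.
\]
I expect this to be the main obstacle: it is not visible from the raw expression, yet it follows immediately from the polynomial identity $P(t)\bigl((1-t)^2+1\bigr)=(1-t)(16-t^8)$, which one verifies by expansion since both sides equal $t^9-t^8-16t+16=(t-1)(t^8-16)$. Granting it, the awkward parity of the exponent is resolved automatically—the factor $(1-t)$ supplied by the simplified integrand promotes $(1-t)^{2n-1}$ to $(1-t)^{2n}$—and
\[
\text{RHS}=(-1)^n\int_0^1\frac{(1-t)^{2n}}{(1-t)^2+1}\,\dd t=(-1)^n\int_0^1\frac{x^{2n}}{1+x^2}\,\dd x=R_n,
\]
the middle equality being the substitution $x=1-t$. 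This coincides with the representation from the first paragraph, which proves the theorem. The same scheme—geometric tail, beta integrals, one polynomial identity, a linear substitution—should transfer essentially verbatim to the $\log 2$ analogue of Section~\ref{sec:log2}, with $1/(1+x^2)$ replaced by $1/(1+x)$ and a correspondingly adjusted numerator polynomial.
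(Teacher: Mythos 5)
Your proposal is correct and follows essentially the same route as the paper: beta-integral representation of the Pochhammer reciprocals, interchange of sum and integral justified by uniform convergence, the rational-function identity $P(t)/(16-t^{8})=(1-t)/\bigl((1-t)^{2}+1\bigr)$ (which the paper reaches in two steps via the Sophie Germain factorization $4+t^{4}=(t^{2}+2t+2)(t^{2}-2t+2)$, whereas you verify the equivalent polynomial identity $P(t)\bigl((1-t)^{2}+1\bigr)=(1-t)(16-t^{8})$ directly by expansion), followed by the substitution $x=1-t$ and the standard evaluation $\int_{0}^{1}x^{2n}/(1+x^{2})\,\dd x=(-1)^{n}R_{n}$. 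The only structural difference is that you omit the paper's opening digamma reformulation (Lemmas~\ref{lem:alt-digamma} and~\ref{lem:gauss}), which in fact plays no role in the paper's own evaluation of the BBP sum, so nothing is lost.
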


\paragraph{Example for $n=1$.} For $n=1$, we have $R_1 = \pi/4 - 1$. The theorem gives:
\begin{align*}
\frac{\pi}{4}-1 = -\frac{1}{16}\sum_{k=0}^{\infty}\frac{1}{16^{k}}\Bigg[&\frac{8}{(8k+1)(8k+2)}-\frac{4}{(8k+3)(8k+4)}-\frac{4}{(8k+4)(8k+5)} \\
&-\frac{2}{(8k+5)(8k+6)}+\frac{1}{(8k+7)(8k+8)}+\frac{1}{(8k+8)(8k+9)}\Bigg].
\end{align*}
Numerically, $-0.214601836\dots$.

\section{Basic lemmas}

We use standard facts on the gamma and digamma functions; see the
NIST DLMF \cite{DLMF}.

\begin{lemma}[Alternating sum transformation]\label{lem:alt-digamma}
For $\Re z>0$, 
\[
\sum_{t=0}^{\infty}\frac{(-1)^{t}}{t+z}=\frac{1}{2}\Big(\psi\!\big(\tfrac{z+1}{2}\big)-\psi\!\big(\tfrac{z}{2}\big)\Big).
\]
See \cite[Eqs.~5.7.6, 5.5.2]{DLMF}.
\end{lemma}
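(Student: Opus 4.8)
The plan is to reduce the alternating sum to the series representation of the digamma function, $\psi(w)=-\gamma+\sum_{s=0}^{\infty}\bigl(\tfrac{1}{s+1}-\tfrac{1}{s+w}\bigr)$ (see \cite[Eq.~5.7.6]{DLMF}), by splitting the sum according to the parity of the index. First I would confirm convergence of $\sum_{t\ge0}(-1)^{t}/(t+z)$ for $\Re z>0$: the terms $a_{t}=1/(t+z)$ tend to $0$ and have bounded variation, since $a_{t+1}-a_{t}=-1/\bigl((t+z)(t+1+z)\bigr)=O(t^{-2})$ is summable, so Dirichlet's test applies. This convergence is what licenses the manipulations that follow.

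Next I would group consecutive terms in pairs. Because the series converges, grouping $(t=2s,\,t=2s+1)$ preserves the value, giving
\[
\sum_{t=0}^{\infty}\frac{(-1)^{t}}{t+z}=\sum_{s=0}^{\infty}\Bigl(\frac{1}{2s+z}-\frac{1}{2s+z+1}\Bigr).
\]
The reason to pair the terms rather than separate the even- and odd-indexed subsums is that each subsum diverges on its own; keeping them together produces a summand $\tfrac12/\bigl((s+\tfrac z2)(s+\tfrac{z+1}{2})\bigr)=O(s^{-2})$, so the grouped series converges absolutely and is safe to rearrange unconditionally. Rewriting each denominator as $2s+z=2(s+\tfrac z2)$ and $2s+z+1=2(s+\tfrac{z+1}{2})$ exposes the half-integer arguments $z/2$ and $(z+1)/2$:
\[
\sum_{s=0}^{\infty}\Bigl(\frac{1}{2s+z}-\frac{1}{2s+z+1}\Bigr)=\frac{1}{2}\sum_{s=0}^{\infty}\Bigl(\frac{1}{s+\frac z2}-\frac{1}{s+\frac{z+1}{2}}\Bigr).
\]

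Finally I would recognize the right-hand side as a difference of two digamma series. Writing the representation \cite[Eq.~5.7.6]{DLMF} for both $w=z/2$ and $w=(z+1)/2$ and subtracting, the constant $-\gamma$ and the regularizing terms $1/(s+1)$ cancel identically, leaving
\[
\psi\!\Bigl(\frac{z+1}{2}\Bigr)-\psi\!\Bigl(\frac z2\Bigr)=\sum_{s=0}^{\infty}\Bigl(\frac{1}{s+\frac z2}-\frac{1}{s+\frac{z+1}{2}}\Bigr),
\]
which is exactly twice the grouped sum above. Combining the three displays yields the claimed identity. The only delicate point is the one already flagged: the even- and odd-indexed subsums diverge separately, so the cancellation of the $-\gamma$ and $1/(s+1)$ pieces must be carried out inside the convergent grouped series (equivalently, as a difference of two copies of Eq.~5.7.6), never by subtracting two divergent harmonic-type series term by term. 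Everything else is the elementary bookkeeping of matching up the two $O(s^{-2})$ series. An alternative route via the integral representation $\sum_{t\ge0}(-1)^{t}/(t+z)=\int_{0}^{1}x^{z-1}/(1+x)\,\dd x$ would meet the same convergence subtlety at the endpoint and lead back to the same digamma difference, so I would prefer the series argument as the most self-contained.
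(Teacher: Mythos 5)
Your proof is correct and matches the route the paper intends: the paper gives no argument beyond citing the DLMF digamma series representation (Eq.~5.7.6) and recurrence (Eq.~5.5.2), and your pairing of consecutive terms followed by subtracting two copies of the series $\psi(w)=-\gamma+\sum_{s\ge0}\bigl(\tfrac{1}{s+1}-\tfrac{1}{s+w}\bigr)$ is exactly the standard derivation those citations encode. Your care about the divergence of the separate even/odd subsums and about justifying the grouping via Dirichlet's test is a welcome filling-in of details the paper leaves implicit.
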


\begin{lemma}[Gauss's multiplication formula for $\psi$]\label{lem:gauss}
For $m\in\mathbb{N}$, 
\[
\psi(mz)=\frac{1}{m}\sum_{r=0}^{m-1}\psi\!\Big(z+\frac{r}{m}\Big)+\log m.
\]
See \cite[Eq.~5.5.6]{DLMF}.
\end{lemma}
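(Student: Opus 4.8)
The plan is to derive the digamma identity by logarithmic differentiation of the Gauss multiplication formula for the gamma function. Recall that for $m\in\mathbb{N}$,
\[
\prod_{r=0}^{m-1}\Gamma\!\Big(z+\frac{r}{m}\Big)=(2\pi)^{(m-1)/2}\,m^{1/2-mz}\,\Gamma(mz).
\]
First I would take logarithms for $z>0$, where every factor is positive, obtaining
\[
\sum_{r=0}^{m-1}\log\Gamma\!\Big(z+\frac{r}{m}\Big)=\frac{m-1}{2}\log(2\pi)+\Big(\tfrac12-mz\Big)\log m+\log\Gamma(mz).
\]
Differentiating in $z$ and using $\frac{\dd}{\dd z}\log\Gamma(z+r/m)=\psi(z+r/m)$ together with the chain rule $\frac{\dd}{\dd z}\log\Gamma(mz)=m\,\psi(mz)$, the constant $\frac{m-1}{2}\log(2\pi)$ vanishes and the linear term contributes $-m\log m$, leaving
\[
\sum_{r=0}^{m-1}\psi\!\Big(z+\frac{r}{m}\Big)=-m\log m+m\,\psi(mz).
\]
Dividing by $m$ and rearranging gives the stated formula on $z>0$, and it extends to all $z$ away from the poles by analytic continuation, both sides being meromorphic.

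Should a self-contained argument be preferred (one that does not quote the gamma multiplication theorem), I would instead start from the series $\psi(w)=-\gamma+\sum_{k\ge0}\big(\tfrac{1}{k+1}-\tfrac{1}{k+w}\big)$, sum it over $r=0,\dots,m-1$, and truncate the inner index at $k<N$. The crucial step is the reindexing
\[
\sum_{r=0}^{m-1}\sum_{k=0}^{N-1}\frac{1}{k+z+r/m}=m\sum_{r=0}^{m-1}\sum_{k=0}^{N-1}\frac{1}{(mk+r)+mz}=m\sum_{j=0}^{mN-1}\frac{1}{j+mz},
\]
valid because $(k,r)\mapsto mk+r$ is a bijection of $\{0,\dots,N-1\}\times\{0,\dots,m-1\}$ onto $\{0,\dots,mN-1\}$. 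Comparing the truncated double sum with the matching truncation of $\psi(mz)$ and absorbing the leftover harmonic pieces through $H_{mN}-H_N=\log m+o(1)$ (equivalently $H_N=\log N+\gamma+o(1)$) produces precisely the extra $\log m$, while the two copies of $\gamma$ cancel.

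The main obstacle in both routes is convergence bookkeeping rather than any deep idea. In the series approach the reindexing is only legitimate once the $\tfrac{1}{k+1}$ counterterms are carried alongside the $\tfrac{1}{k+w}$ terms, so I would keep every sum under the common truncation parameter $N$ and pass to the limit only after the harmonic asymptotics have been applied, never splitting the two conditionally convergent pieces prematurely. In the differentiation approach the only subtlety is justifying term-by-term differentiation of the log-gamma series and the final analytic continuation off the real axis, both standard for these meromorphic functions.
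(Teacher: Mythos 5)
Your proposal is correct. The paper gives no proof of this lemma, simply quoting it from the DLMF (Eq.~5.5.6); your first route---logarithmic differentiation of Gauss's multiplication formula for $\Gamma$---is precisely the standard derivation behind that citation, and your self-contained series argument, with the reindexing $(k,r)\mapsto mk+r$ kept under a common truncation $N$ and the extra $\log m$ extracted from $H_{mN}-H_{N}\to\log m$, is also sound.
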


\section{Proof of Theorem~\ref{thm:main}}
\begin{proof}[Proof of Theorem~\ref{thm:main}]
Starting from the alternating series for $R_n$, Lemma~\ref{lem:alt-digamma} with $z=(2n+1)/2$ gives 
\begin{equation}
4R_{n}=(-1)^{n}\Big(\psi\!\Big(\frac{2n+3}{4}\Big)-\psi\!\Big(\frac{2n+1}{4}\Big)\Big).\label{eq:Rn-digamma}
\end{equation}
Applying Lemma~\ref{lem:gauss} with $m=8$ to each digamma (the logarithmic terms cancel) yields
\begin{equation}
4R_{n}=\frac{(-1)^{n}}{8}\sum_{r=0}^{7}\left(\psi\!\Big(\frac{2n+3}{32}+\frac{r}{8}\Big)-\psi\!\Big(\frac{2n+1}{32}+\frac{r}{8}\Big)\right).\label{eq:Rn-avg}
\end{equation}
We now evaluate the BBP sum claimed in the theorem. Let $\mathcal{S}_{q,n}$ be the series component for each $q$. Summing the beta integral representation of each term yields
\begin{equation}
\mathcal{S}_{q,n} = \sum_{k=0}^{\infty}\frac{1}{16^{k}}\frac{1}{(8k+q)_{2n}} = \frac{1}{(2n-1)!}\int_{0}^{1}u^{\,q-1}(1-u)^{2n-1}\,\frac{1}{1-u^{8}/16}\,du.\label{eq:sum-to-integral}
\end{equation}
The geometric series in $k$ is uniformly convergent on $[0,1]$ since $u^8/16 \le 1/16$, hence interchanging sum and integral is justified. The weights $K(q)$ in the theorem define a polynomial $P(u)=\sum K(q)u^{q-1}$ chosen to enable a key algebraic simplification. The BBP sum is thus proportional to the integral of $(1-u)^{2n-1}\frac{P(u)}{1-u^{8}/16}$. The core of the proof lies in the identity
\[
\frac{P(u)}{1-u^{8}/16} = (-4)(u-1)\frac{u^{2}+2u+2}{1+u^{4}/4}.
\]
The BBP sum is therefore equal to
\[
\frac{(-1)^{n}(2n-1)!}{16}\sum_q K(q)\mathcal{S}_{q,n} = \frac{(-1)^n}{16}\int_0^1 (1-u)^{2n-1} \left( (-4)(u-1)\frac{u^{2}+2u+2}{1+u^{4}/4} \right) \,du.
\]
Absorbing the factor $-(u-1) = (1-u)$ and using the simplification $\frac{u^2+2u+2}{1+u^4/4} = \frac{4}{u^2-2u+2}$ (which relies on the Sophie Germain identity), the integral becomes
\[
\frac{(-1)^n}{4} \int_0^1 (1-u)^{2n} \frac{4}{u^2-2u+2} \,du = (-1)^n \int_0^1 \frac{(1-u)^{2n}}{(u-1)^2+1} \,du.
\]
With the substitution $v=1-u$, this is
\[
(-1)^n \int_0^1 \frac{v^{2n}}{(-v)^2+1} \,dv = (-1)^n \int_0^1 \frac{v^{2n}}{1+v^2} \,dv.
\]
This final integral is a standard representation for the remainder:
\[
\int_{0}^{1}\frac{v^{2n}}{1+v^2}\,dv=\sum_{j=0}^{\infty}\frac{(-1)^{j}}{2n+2j+1}=(-1)^{n}R_{n}.
\]
Thus the BBP sum evaluates to $R_n$.
\end{proof}

\section{An analogous formula for the tail of the \texorpdfstring{$\log 2$}{log 2} series}\label{sec:log2}

As in the case of $\pi$, $R'_{n} = \sum_{k=n}^{\infty}\frac{(-1)^{\,k-1}}{k}$ admits a compact base-$16$ BBP-type expansion with fixed weights and denominators of length $2n$.

\begin{theorem}\label{thm:log2} For every integer $n\ge1$, 
\begin{align*}
R'_{n} & =(-1)^{\,n-1}\,(2n-1)!\;\sum_{k=0}^{\infty}\frac{1}{16^{k}}\Bigg[\frac{1}{\poch{8k+1}{2n}}+\frac{1}{\poch{8k+2}{2n}}+\frac{1}{2\,\poch{8k+3}{2n}}\\[-0.25em]
 & \hspace{12.7em}-\frac{1}{4\,\poch{8k+5}{2n}}-\frac{1}{4\,\poch{8k+6}{2n}}-\frac{1}{8\,\poch{8k+7}{2n}}\Bigg].
\end{align*}
\end{theorem}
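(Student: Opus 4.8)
The plan is to follow the method of Theorem~\ref{thm:main} almost verbatim, adapting only the residue weights and the target integral. First I would reduce $R'_n$ to a digamma difference: writing $R'_{n}=(-1)^{n-1}\sum_{t\ge0}\frac{(-1)^{t}}{t+n}$ and applying Lemma~\ref{lem:alt-digamma} with $z=n$ gives $R'_{n}=\tfrac{(-1)^{n-1}}{2}\big(\psi(\tfrac{n+1}{2})-\psi(\tfrac{n}{2})\big)$, the exact analogue of~\eqref{eq:Rn-digamma}. Applying Lemma~\ref{lem:gauss} with $m=8$ to each digamma (the $\log 8$ terms again cancel in the difference) produces the eight-residue averaged form paralleling~\eqref{eq:Rn-avg}; this is what dictates that the relevant denominators are the residues $q\in\{1,\dots,8\}$ in base $16$, while the specific weights $K(q)$ are then chosen, as in Theorem~\ref{thm:main}, to force the algebraic collapse below.

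Next I would evaluate the claimed BBP sum directly. Each term has the beta-integral representation~\eqref{eq:sum-to-integral}, and uniform convergence of the geometric series $\sum_{k\ge0}(u^{8}/16)^{k}$ on $[0,1]$ justifies interchanging sum and integral exactly as before. Collecting the weights into the polynomial $P(u)=\sum_q K(q)u^{q-1}=1+u+\tfrac12u^{2}-\tfrac14u^{4}-\tfrac14u^{5}-\tfrac18u^{6}$, the factor $(2n-1)!$ cancels and the BBP sum becomes $(-1)^{n-1}\int_0^1(1-u)^{2n-1}\frac{P(u)}{1-u^{8}/16}\,du$. The crux is the algebraic identity
\[
\frac{P(u)}{1-u^{8}/16}=\frac{2}{u^{2}-2u+2},
\]
which I would establish by checking $16P(u)=2(u^{2}+2u+2)(4-u^{4})$, cancelling the factor $4-u^{4}$ against $16-u^{8}=(4-u^{4})(4+u^{4})$, and then cancelling $u^{2}+2u+2$ against the Sophie Germain factorisation $4+u^{4}=(u^{2}+2u+2)(u^{2}-2u+2)$.

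Finally I would reduce the resulting integral to the standard representation of the tail. The identity turns the BBP sum into $(-1)^{n-1}\int_0^1\frac{2(1-u)^{2n-1}}{(u-1)^{2}+1}\,du$; the substitution $v=1-u$ gives $(-1)^{n-1}\int_0^1\frac{2v^{2n-1}}{1+v^{2}}\,dv$, and a further fold $w=v^{2}$ (Jacobian $2v\,dv=dw$, which absorbs the factor $2$) gives $(-1)^{n-1}\int_0^1\frac{w^{n-1}}{1+w}\,dw$. Since $\int_0^1\frac{w^{n-1}}{1+w}\,dw=\sum_{j\ge0}(-1)^{j}\int_0^1 w^{\,n-1+j}\,dw=\sum_{j\ge0}\frac{(-1)^{j}}{n+j}=(-1)^{n-1}R'_{n}$, the whole expression collapses to $R'_{n}$.

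I expect the main obstacle to be the same as in Theorem~\ref{thm:main}: not the verification of the key identity (routine polynomial division once $P$ is written down), but rather producing the correct weight polynomial $P$ in the first place—equivalently, matching the coefficients forced by the digamma residues so that $16P(u)$ acquires the factor $4-u^{4}$ and exactly one Sophie Germain factor of $4+u^{4}$ survives to cancel. A secondary point specific to the $\log 2$ case is the parity mismatch: the beta integral supplies $(1-u)^{2n-1}$ with an \emph{odd} exponent, rather than the even exponent appearing for $\pi$, so the reduction requires the extra quadratic fold $w=v^{2}$, and it is precisely the constant $2$ in the numerator of the key identity that makes this fold exact.
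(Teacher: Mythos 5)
Your proposal is correct and follows essentially the same route as the paper: the same beta-integral representation of each Pochhammer series, the same weight polynomial $P'(u)$ and key identity $P'(u)/(1-u^{8}/16)=2/(u^{2}-2u+2)$ via the Sophie Germain factorisation, and the same substitution $v=1-u$ reducing everything to $\int_0^1 2v^{2n-1}/(1+v^{2})\,\dd v$. Your only (harmless) variations are the explicit factorisation check $16P'(u)=2(u^{2}+2u+2)(4-u^{4})$, the extra fold $w=v^{2}$ in place of the paper's direct geometric-series expansion, and the digamma preamble, which motivates the residues but is not needed for the verification.
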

\begin{proof}
The proof follows the same structure as for theorem~\ref{thm:main}. We evaluate the BBP sum and show it equals $R'_n$. Let
\[
\mathcal{S}'_{q,n}\;:=\;\sum_{k=0}^{\infty}\frac{1}{16^{k}}\;\frac{1}{(8k+q)_{2n}}=\frac{1}{(2n-1)!}\int_{0}^{1}u^{\,q-1}\,(1-u)^{2n-1}\,\frac{1}{1-u^{8}/16}\,du.
\]
The weights $K'(q)$ given in the theorem define the polynomial
$P'(u)=1+u+\tfrac{1}{2}u^{2}-\tfrac{1}{4}u^{4}-\tfrac{1}{4}u^{5}-\tfrac{1}{8}u^{6}.$
This polynomial is constructed to produce the key algebraic simplification:
\begin{equation}
\frac{P'(u)}{1-u^8/16} = \frac{2}{u^2-2u+2}.\label{eq:log-reduce}
\end{equation}
The BBP sum in the theorem is therefore equal to
\[
(-1)^{n-1}(2n-1)! \sum_q K'(q) \mathcal{S}'_{q,n} = (-1)^{n-1} \int_{0}^{1}(1-u)^{2n-1}\,\frac{2}{u^2-2u+2}\,du.
\]
With the substitution $v=1-u$, the integral becomes
\[
(-1)^{n-1} \int_0^1 v^{2n-1} \frac{2}{v^2+1} dv = (-1)^{n-1} \left( 2 \sum_{k=0}^\infty \frac{(-1)^k}{2n+2k} \right) = (-1)^{n-1} \sum_{k=0}^\infty \frac{(-1)^k}{n+k}.
\]
By definition, $R'_n = (-1)^{n-1} \sum_{j=0}^\infty \frac{(-1)^j}{n+j}$. Thus the BBP sum evaluates to $R'_n$.
\end{proof}

\paragraph{Example for $n=1$.} For $n=1$, we have $R'_1 = \log 2$. The theorem gives:
\begin{align*}
\log 2 = \sum_{k=0}^{\infty}\frac{1}{16^{k}}\Bigg[&\frac{1}{(8k+1)(8k+2)}+\frac{1}{(8k+2)(8k+3)}+\frac{1/2}{(8k+3)(8k+4)} \\
&-\frac{1/4}{(8k+5)(8k+6)}-\frac{1/4}{(8k+6)(8k+7)}-\frac{1/8}{(8k+7)(8k+8)}\Bigg].
\end{align*}
Numerically, $0.693147180\dots$.

\section{Perspectives on Diophantine applications}

Beyond its direct application for digit extraction, the formula presented in this paper suggests a broader research avenue. Our proof shows that the BBP-type formula for the remainder $R_n$ is structurally equivalent to an identity for the analytic function $f(z) = \psi\left(\frac{z+3}{4}\right) - \psi\left(\frac{z+1}{4}\right)$ evaluated at $z=2n$. While the BBP series itself exhibits linear convergence, ill-suited for deriving strong irrationality measures, the underlying function $f(z)$ is a more promising candidate for analysis via Padé approximants, in the spirit of the works of Rivoal~\cite{Rivoal2001} and Prévost~\cite{Prevost1995,Prevost2000}.

Choosing suitable polynomials $Q_m$ and setting $L_{n,m}=\int_0^1 Q_m(1-x)\,\frac{x^{2n}}{1+x^2}\,dx,$ one obtains linear forms $L_{n,m}=A_{n,m}\pi+B_{n,m}$ with $A_{n,m},B_{n,m}\in\mathbb{Q}$, whose denominators are governed by the Pochhammer products appearing in the BBP formula. This structure is amenable to the refined analysis required for Diophantine questions. While we do not claim any quantitative improvement for the irrationality measure of $\pi$, for which the current best bound is $\mu(\pi) \le 7.103...$~\cite{ZeilbergerZudilin2019}, we point out that the algebraic structure behind the BBP remainder provides a clean framework that merits further investigation.

In this light, the search for BBP-type identities can also serve as a systematic way to identify analytic functions whose arithmetic structure appears promising for Diophantine investigations.

\end{document}